\newcommand{\Rset}{\mathbb{R}}
\newcommand{\F}{\mathcal{F}}
\newcommand{\Pra}{\mathcal{P}}
\begin{document}

\title{SELFDECOMPOSABILITY PERPETUITY LAWS AND STOPPING TIMES\footnote{Research supported by
the grant No. P03A 029 14 from KBN, Warsaw, Poland}}

\author{Zbigniew J. Jurek (Wroclaw, Poland)}

\date{see: \textbf{Probab. Math. Stat.} vol. 19, 1999, pp. 413-419}

\maketitle

\newtheorem{thm}{THEOREM}
\newtheorem{lem}{LEMMA}
\newtheorem{prop}{PROPOSITION}
\newtheorem{cor}{COROLLARY}

\theoremstyle{remark}
\newtheorem{rem}{REMARK}

In the probability theory limit distributions (or probability
measures) are often characterized by some convolution equations
(factorization properties) rather than by Fourier transforms (the
characteristic functionals). In fact, usually the later follows
the first one. Equations, in question, involve the
multiplication by the positive scalars $c$ or an action of the
corresponding dilation $T_{c}$ on measures. In such a setting, it
seems that there is no way for stopping times (or in general, for
the stochastic analysis) to come into the ``picture''. However, if
one accepts the view that the primary objective, in the classical
limit distributions theory, is to describe the limiting distributions
(or random variables) by the tools of random integrals/functionals
then one can use the stopping times. In this paper we illustrate such
a possibility in the case of selfdecomposability random variables
(i.e. L\'evy class L) with values in a real separable Banach space.
Also some applications of our approach to perpetuity laws are
presented; cf. \cite{Du1}, \cite{ED1}, \cite{GG1}. In fact, we show
that all selfdecomposable distributions are perpetuity laws.
\vspace{1cm}

\textbf{1.}
Let E be a real separable Banach space. An E-valued random variable
(rv) $X$, defined on a complete probability space $(\Omega,\F,\Pra)$, is said to
be \emph{selfdecomposable} (or \emph{a L\'evy class} L) if for
each $t>0$ there exists a rv $X_{t}$ independent of $X$ such that
\begin{equation}
X\stackrel{d}{=}X_{t}+e^{-t}X.\label{jr1}
\end{equation}
where `` $\stackrel{d}{=}$ '' means equality in distribution.
\medskip

\noindent
\textbf{Remark 1.} (i) The class of selfdecomposabile rv's (distributions)
coincides with the class of limiting rv's of the following
infinitesimal triangular arrays:
\[
a_{n}(Z_{1}+Z_{2}+\ldots +Z_{n})+b_{n},
\]
where $a_{n}>0$, $b_{n}\in \mathrm{E}$ and $Z_{1},Z_{2},\ldots$ are
independent $\mathrm{E}$-valued rv's; cf. for instance \cite{JM1},
Chapter 3.\\
(ii) In case of i.i.d. $Z_{n}$'s one gets in the above scheme the
class of all stable distributions.\\
(iii) In terms of probability distributions equation (\ref{jr1})
reads that for each $0<c<1$ there exists a probability measure
$\mu_{c}$ such that
\begin{equation}
\mu=\mu_{c}*T_{c}\mu.
\end{equation}
where `` $*$ '' denotes the convolution of measures and
$(T_{c}\mu)(\ \cdot\ )=\mu(c^{-1}\ \cdot\ )$. In other words, $T_{c}\mu$
is the image of a measure $\mu$ under the linear mapping $T_{c}\colon
\mathrm{E}\to \mathrm{E}$ given by $T_{c}x=cx$, $x\in \mathrm{E}$.
\medskip

Let us also recall that a stochastic base is an increasing and right
continuous family of $\sigma$-fields $\F_{t}\subset \F$; (i.e.
$\F_{s}\subset \F_{t}$ for $s<t$ and $\F_{t}=\bigcap_{s>t}\F_{s}$).

Furthermore, any mapping $\tau\colon \Omega\to [0,\infty)$ such that
\begin{equation}
\{\omega:\tau(\omega)\le t\}\in \F_{t},\ t\ge 0.
\end{equation}
is called \emph{a stopping time}.

A family $Y(t)$, $t\ge 0$, of E-valued random variables is called
\emph{a L\'evy process} provided
\begin{itemize}
\item
$Y(0)=0$ $\Pra$-a.s., $Y(t+s)-Y(s)\stackrel{d}{=}Y(t)$
for all $s,t\ge 0$,
\item
$Y(t_{k})-Y(t_{k-1})$, $k=1,2,\ldots,n$,
\emph{are independent} for all $0\le t_{0}<\ldots<t_{n}$, $n\ge 1$,
\item
$t\mapsto Y(t,\omega)$ \emph{are cadlag functions, for}
$\Pra$-\emph{a.a.} $\omega\in \Omega$.
\end{itemize}

Of course, for any $c>0$ and a L\'evy process $Y$, one has that
$Y_{c}(t):=Y(t+c)-Y(t)$, $t\ge 0$, is a new L\'evy process with
$Y_{c}\stackrel{d}{=}Y$ in the Skorohod space
$D_{E}[0,\infty)$ of all cadlag functions. Moreover, $Y_{c}$ is
independent of the $\sigma$-field $\sigma\{Y(t):0\le t\le
c\}=\F_{c}^{Y}$ In fact, for any stopping $\tau$ with respect to
$\F_{t}^{Y}$ and has that
\begin{equation}
Y_{\tau}(t):=Y(t+\tau)-Y(\tau),\ t\ge 0,\label{jr2}
\end{equation}
is a L\'evy process such that $Y_{\tau}\stackrel{d}{=}Y$ and $Y_{\tau}$
is independent of the $\sigma$-field $\F_{\tau}$ defined as follows
\begin{equation}
\F_{\tau}=\{A\in \F:A\cap[\tau\le t]\in \F_{t}\ \mbox{for each}\ t\ge 0\};
\end{equation}
cf. for instance \cite{Bi1}, Theorem 32.5 to derive the above
statements for any L\'evy process.
\vspace{0.5cm}

Here is the main result which extends (\ref{jr1}) for some stopping
times.

\begin{thm}
Suppose $X$ is a selfdecomposable $\mathrm{E}$-valued rv. Then there
exists a stochastic base $(\F_{t})_{t\ge 0}$ such that for each
$\F_{t}$-stopping time $\tau$ there are independent\label{jth1}
$\mathrm{E}$-valued rv's $X_{\tau}$ and $X'$ satisfying
\begin{description}
\item[(i)] $X\stackrel{d}{=}X_{\tau}+e^{-\tau}X'$;
\item[(ii)] $X'$ is independent of $X$ and $X'\stackrel{d}{=}X$;
\item[(iii)] vector $(X_{\tau},e^{-\tau})$ is independent of $X'$.
\end{description}
\end{thm}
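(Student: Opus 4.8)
The plan is to realize the given selfdecomposable law through its background random integral representation and then to split that integral at the stopping time. Recall the standard integral representation for L\'evy class $L$ (due to the author and Vervaat): a law is selfdecomposable precisely when it is the law of a random integral
$$I := \int_{(0,\infty)} e^{-s}\,dY(s)$$
driven by a suitable $\mathrm{E}$-valued L\'evy process $Y$. I would fix such a $Y$, realize $X$ as $I$ (in distribution), and take the stochastic base to be the natural filtration $\F_t := \F_t^Y$. Everything below is then read off the path of $Y$, the conclusions being distributional.

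Given an $\F_t$-stopping time $\tau$, the first step is to break the integral at $\tau$,
$$I = \int_{(0,\tau]} e^{-s}\,dY(s) + \int_{(\tau,\infty)} e^{-s}\,dY(s) =: X_\tau + R_\tau,$$
so that $X_\tau$ and $e^{-\tau}$ are $\F_\tau$-measurable. The decisive manipulation is the time shift $s = \tau + u$ in the tail integral. Writing $Y_\tau(u) = Y(u+\tau)-Y(\tau)$ as in (\ref{jr2}), and using that $\tau$, hence $e^{-\tau}$, is $\F_\tau$-measurable and constant in the integration variable $u$ (the integral being definable pathwise via integration by parts, since $e^{-s}$ has bounded variation), one obtains
$$R_\tau = e^{-\tau}\int_{(0,\infty)} e^{-u}\,dY_\tau(u) =: e^{-\tau}\,\widetilde X.$$

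Now I invoke the strong Markov property recorded just before the theorem (cf. \cite{Bi1}, Thm.~32.5): $Y_\tau \stackrel{d}{=} Y$ in $D_{\mathrm{E}}[0,\infty)$ and $Y_\tau$ is independent of $\F_\tau$. Since the integral functional $Y \mapsto \int_{(0,\infty)} e^{-u}\,dY(u)$ is measurable on Skorohod space, the first part yields $\widetilde X \stackrel{d}{=} I \stackrel{d}{=} X$, while the second makes the $\sigma(Y_\tau)$-measurable variable $\widetilde X$ independent of the $\F_\tau$-measurable pair $(X_\tau, e^{-\tau})$. Thus on the $Y$-space one already has the pathwise identity $X = X_\tau + e^{-\tau}\widetilde X$ with $(X_\tau,e^{-\tau})$ independent of $\widetilde X$ and $\widetilde X \stackrel{d}{=} X$.

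The final, and most delicate, point is reconciling (i) with the independence demanded in (ii): in the realization above the ``inner'' copy $\widetilde X$ is literally a piece of $X$ and so is certainly not independent of $X$. I would resolve this by passing to a product space: keep $(X_\tau,e^{-\tau},X)$ as constructed from $Y$, and adjoin an $X'$ that is an independent copy of $X$, independent of the entire $Y$-space. Then (ii) and (iii) hold by construction. For (i) the key observation is that, because $\widetilde X$ is independent of $(X_\tau,e^{-\tau})$ with $\widetilde X \stackrel{d}{=} X \stackrel{d}{=} X'$, the law of $X_\tau + e^{-\tau}W$ depends only on the law of the independent factor $W$; replacing $\widetilde X$ by $X'$ therefore leaves the distribution of the sum unchanged, giving $X_\tau + e^{-\tau}X' \stackrel{d}{=} X_\tau + e^{-\tau}\widetilde X = X$. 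The main obstacle is thus conceptual rather than computational: one must notice that (i) is a statement only about laws, robust under swapping the internal continuation $\widetilde X$ for an external independent equidistributed $X'$, and that precisely condition (iii) is what guarantees this robustness.
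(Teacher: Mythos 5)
Your proposal is correct and follows essentially the same route as the paper: realize $X$ via the random integral representation $\int_{(0,\infty)}e^{-s}\,dY(s)$, take the natural filtration of the BDLP $Y$, split the integral at $\tau$, and apply the strong Markov property to get $Y_\tau\stackrel{d}{=}Y$ independent of $\F_\tau$. Your closing step --- swapping the internal continuation $\widetilde X$ for an external independent copy $X'$ to secure (ii), justified by (iii) --- is a careful treatment of a point the paper's proof leaves implicit, but it is not a different method.
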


\begin{proof}
From \cite{Ju1} or \cite{JM1} p.124 we conclude that rv $X$ is
selfdecomposable (i.e. (\ref{jr1}) holds) if and only if there exists
a unique, in distribution, L\'evy process $Y$ such that
$\mathbb{E}[\log(1+\|Y(1)\|)]<\infty$ and
\begin{equation}
X\stackrel{d}{=}\int_{(0,\infty)}e^{-s}dY(s).\label{jr4}
\end{equation}
(We refer to $Y$ as the \textbf{b}ackground \textbf{d}riving
\textbf{L}\'evy \textbf{p}rocess of $X$; in short: Y is BDLP for
$X$, cf. \cite{Ju2}). Taking $\F_{t}=\sigma(Y(s):s\le t)$ and defining
\begin{equation}
Z(t):=\int_{(0,t]}e^{-s}dY(s)\equiv e^{-t}Y(t)+\int_{(0,t]}Y(s-)e^{-s}ds,
\label{jr3}
\end{equation}
we have that $Z(\tau(\omega))$ is $\F_{\tau}$-measurable for a stopping
time $\tau$; cf. \cite{Sh1}, p. 18--20. Finally using (\ref{jr2}) and
(\ref{jr3}) one gets
\begin{multline}
X \stackrel{d}{=} \int_{(0,\tau]}e^{-s}dY(s)+
\int_{(\tau,\infty)}e^{-s}dY(s)={}\\
= Z(\tau)+e^{-\tau}\int_{(0,\infty)}e^{-s}dY_{\tau}(s)=
X_{\tau}+e^{-\tau}X',\label{jr6}
\end{multline}
with $X_{\tau}=Z(\tau)$ independent of
$X'=\int_{(0,\infty)}e^{-s}dY(s)\stackrel{d}{=}X$ because $Y_{\tau}$
is independent of $\F_{\tau}$. This completes the proof of Theorem 1.
\end{proof}

\noindent
\textbf{Remark 2.} The integral in (\ref{jr4}) is defined as a limit
of $Z(t)$, given by (\ref{jr3}), as $t\to\infty$. Existence of the
limit (in probability, a.s., or in distribution) is equivalent to the
condition $\mathbb{E}[\log(1+\|Y(1)\|)]<\infty$; cf. \cite{Ju1} or
\cite{JM1} p.122.

\begin{cor}
Let $Y$ be a BDLP of a selfdecomposable rv $X$ and let
$\F_{t}=\sigma(Y(s):s\le t)$, $t\ge 0$ be the stochastic base given
by $Y$. Then for each $\F_{t}$-stopping time $\tau$ there exists a rv
$X_{\tau}$ independent of $X$ such that
\begin{equation}
X\stackrel{d}{=}X_{\tau}+e^{-\tau}X.\label{jr5}
\end{equation}
\end{cor}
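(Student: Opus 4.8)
The plan is to deduce the corollary directly from Theorem~\ref{jth1}, the only substantive work being to recognize that the auxiliary copy $X'$ produced there may be taken as an independent copy of $X$ itself. First I would apply Theorem~\ref{jth1} with the stochastic base $\F_{t}=\sigma(Y(s):s\le t)$ generated by the BDLP $Y$; this is legitimate because the proof of the theorem builds exactly this base out of the BDLP. The theorem then supplies independent rv's $X_{\tau}$ and $X'$ with $X\stackrel{d}{=}X_{\tau}+e^{-\tau}X'$, where $X'\stackrel{d}{=}X$ and, more strongly by part (iii), the \emph{vector} $(X_{\tau},e^{-\tau})$ is independent of $X'$.

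The key observation is that the distribution of the sum $X_{\tau}+e^{-\tau}X'$ is a function only of the joint law of the pair $(X_{\tau},e^{-\tau})$ together with the separate law of $X'$ (separate precisely because of the independence in (iii)). Since $X'\stackrel{d}{=}X$, I may therefore replace $X'$ by any copy of $X$ that is independent of $(X_{\tau},e^{-\tau})$ without altering the law of the sum. Taking such an independent copy and denoting it again by $X$, we obtain
\[
X\stackrel{d}{=}X_{\tau}+e^{-\tau}X,
\]
with $X_{\tau}$ independent of $X$, which is exactly (\ref{jr5}).

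The one point demanding care, and the place where the statement could be misread, is that the genuine rv $X=\int_{(0,\infty)}e^{-s}dY(s)$ and the rv $X_{\tau}=Z(\tau)$ are both functionals of the same process $Y$ and so are \emph{not} independent of one another. Consequently the $X$ on the right-hand side of (\ref{jr5}) must be understood in the distributional sense as the independent copy $X'$ furnished by the theorem, in exact analogy with the defining relation (\ref{jr1}), where $X_{t}$ is itself required to be independent of $X$. Once this reading is accepted, I expect no further computation to be needed: parts (i)--(iii) of Theorem~\ref{jth1} already encode all the required independence, and the corollary is merely their reformulation in the selfdecomposability form (\ref{jr5}). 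The main obstacle is thus conceptual rather than technical, namely to make this substitution of the independent copy precise while keeping track of which independence relation (the vector-level statement in (iii)) is actually being used.
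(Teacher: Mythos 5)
Your proposal is correct and is essentially the paper's own (implicit) argument: the corollary is stated as an immediate consequence of Theorem~\ref{jth1}, with the $X$ on the right of (\ref{jr5}) read as the independent copy $X'$ supplied by the theorem, exactly as in the defining relation (\ref{jr1}). Your observation that part (iii) is what licenses replacing $X'$ by an independent copy of $X$ without changing the law of the sum, and that $Z(\tau)$ is not independent of the actual random integral $\int_{(0,\infty)}e^{-s}dY(s)$, is the right reading and requires no further work.
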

\vspace{0.5cm}

The equality in distribution in (\ref{jr5}) can be strengthen as
follows.

\begin{cor}
Let $Y$ be $\mathrm{E}$-valued L\'evy process such that\\
$\mathbb{E}[\log(1+\|Y(1)\|)]<\infty$ and $(\F_{t})_{t\ge 0}$ be the
natural filtration given by $Y$. Then for any $\F_{t}$-stopping time $\tau$
one has
\begin{multline}\label{jr7}
\int_{(0,\infty)}e^{-s}dY(s,\omega)=
\int_{(0,\tau(\omega)]}e^{-s}dY(s,\omega)+{}\\
+e^{-\tau(\omega)}\int_{(0,\infty)}e^{-s}dY(s+\tau(\omega),\omega)
\end{multline}
for $\Pra$-a.a. $\omega\in \Omega$.
\end{cor}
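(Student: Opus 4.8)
The plan is to read (\ref{jr7}) as the pathwise, almost-sure counterpart of the decomposition (\ref{jr6}) obtained inside the proof of Theorem~\ref{jth1}. Inspecting that chain one notices that only its very first relation used ``$\stackrel{d}{=}$'', namely the passage from $X$ to the law of the full integral $\int_{(0,\infty)}e^{-s}dY(s)$; the splitting of the domain $(0,\infty)$ into $(0,\tau]$ and $(\tau,\infty)$ and the subsequent change of variable in the tail are, by contrast, genuine pointwise identities between integrals. Accordingly I would fix $\omega$ in a set of full $\Pra$-measure and establish directly for that single path the two identities: (a) the additivity $\int_{(0,\infty)}e^{-s}dY(s,\omega)=\int_{(0,\tau(\omega)]}e^{-s}dY(s,\omega)+\int_{(\tau(\omega),\infty)}e^{-s}dY(s,\omega)$, and (b) the substitution $\int_{(\tau(\omega),\infty)}e^{-s}dY(s,\omega)=e^{-\tau(\omega)}\int_{(0,\infty)}e^{-u}dY(u+\tau(\omega),\omega)$.

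To control convergence and render these manipulations elementary I would pass to the integration-by-parts representation (\ref{jr3}), $Z(t,\omega)=e^{-t}Y(t,\omega)+\int_{(0,t]}Y(s-,\omega)e^{-s}\,ds$. By Remark~2 the hypothesis $\mathbb{E}[\log(1+\|Y(1)\|)]<\infty$ ensures, for $\Pra$-a.a. $\omega$, that $\lim_{t\to\infty}Z(t,\omega)$ exists and that the boundary term $e^{-t}Y(t,\omega)$ tends to $0$; hence $\int_{(0,\infty)}e^{-s}dY(s,\omega)=\int_{(0,\infty)}Y(s-,\omega)e^{-s}\,ds$ is an honest Lebesgue integral along the fixed cadlag path. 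Applying the same representation on $(\tau(\omega),t]$ and on the shifted path $u\mapsto Y(u+\tau(\omega),\omega)$ produces matching boundary contributions $-e^{-\tau(\omega)}Y(\tau(\omega),\omega)$ on the two sides of (b), which cancel, so that (b) reduces to the deterministic linear substitution $s=u+\tau(\omega)$, $e^{-s}=e^{-\tau(\omega)}e^{-u}$, in a convergent Lebesgue integral. Combining (a) and (b) with $Z(\tau(\omega),\omega)=\int_{(0,\tau(\omega)]}e^{-s}dY(s,\omega)$ then yields (\ref{jr7}) for every admissible $\omega$, hence $\Pra$-almost surely.

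The step I expect to be the main obstacle is purely measure-theoretic: securing the \emph{simultaneous} a.s. existence of all three integrals and the pathwise, rather than merely distributional, validity of the change of variable. The tail and the shifted integral exist for a.a. $\omega$ because $Y_{\tau}(u):=Y(u+\tau)-Y(\tau)$ is again a L\'evy process with $Y_{\tau}\stackrel{d}{=}Y$, so it inherits the log-moment condition, and because $dY(u+\tau(\omega),\omega)=dY_{\tau}(u,\omega)$, the constant $Y(\tau(\omega),\omega)$ being annihilated by the differential. Passing first to the Lebesgue form (\ref{jr3}) is precisely what converts the otherwise delicate substitution against $dY$ into an ordinary change of variable valid for every $\omega$ in the common full-measure set on which $Y(\cdot,\omega)$ is cadlag, the limit $\lim_{t\to\infty}Z(t,\omega)$ exists, and $e^{-t}Y(t,\omega)\to 0$.
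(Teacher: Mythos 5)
Your proposal is correct and follows essentially the same route as the paper, whose proof simply invokes Remark 2 (existence of the integrals as limits of $Z(t)$) together with the pathwise validity of the splitting and change of variable already present in formula (\ref{jr6}). You have merely spelled out the details the paper leaves implicit, in particular using the integration-by-parts form (\ref{jr3}) to reduce everything to a Lebesgue integral along a fixed cadlag path and checking that the boundary terms $-e^{-\tau(\omega)}Y(\tau(\omega),\omega)$ cancel.
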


\begin{proof}
This is a consequence of Remark 2 and the equality in the formula
(\ref{jr6}).
\end{proof}

It is well-know that any L\'evy process $Y$ can be written as a sum of
two independent L\'evy processes $Y^{c}$ and $Y^{d}$, i.e.,
$Y=Y^{c}+Y^{d}$, where $Y^{c}$ is purely continuous (Gaussian) while
$Y^{d}$ is purely discontinuous (cadlag) process. Furthermore, for a
Borel subset $A$ separated from zero
(i.e., $A\subset \{x\in E:\|x\|\ge \epsilon\}$ for some $\epsilon>0$) we
define
\[
Y^{d}(t;A):=\sum_{0<s\le t}\Delta Y^{d}(s)1_{A}(\Delta Y^{d}(s)),
\]
where the jumps $\Delta Y^{d}(s):=Y^{d}(s)-Y^{d}(s-)$ are in the set
$A$, which a L\'evy process independent of the process
$Y^{d}(t)-Y^{d}(t;A)$. All the above allows us to have the following:

\begin{cor}
\begin{description}
\item[(i)]
Let $Y^{d}$ be a purely discontinuous L\'evy process with finite
logarithmic moment and \label{jcor1}
\[
\tau_{0}=\inf\{t>0: Y^{d}(t)\ne 0\}
\]
be the stopping time of the first non-zero value. Then
\begin{multline}\label{jr9}
\int_{(0,\infty)}e^{-s}dY^{d}(s)=e^{-\tau_{0}}Y^{d}(\tau_{0})+\\
e^{-\tau_{0}}\int_{(0,\infty)}e^{-s}dY^{d}(s+\tau_{0}),\ \Pra\mbox{-a.s.}
\end{multline}
\item[(ii)]
Let $\tau_{A}=\inf\{t>0: Y^{d}(t;A)\ne 0\}$ be the stopping time of
the first jump whose values is in $A$. Then
\begin{multline}
\int_{(0,\infty)}e^{-s}dY^{d}(s;A)=e^{-\tau_{A}}Y^{d}(\tau_{A};A)+{}\\
+e^{-\tau_{A}}\int_{(0,\infty)}e^{-s}dY^{d}(s+\tau_{A};A),\ \Pra\mbox{-a.s.}
\label{jr8}
\end{multline}
\end{description}
\end{cor}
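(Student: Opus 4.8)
The plan is to obtain both identities as direct specializations of Corollary 2, whose almost-sure pathwise decomposition (\ref{jr7}) is valid for \emph{every} $\mathcal{F}_t$-stopping time. The whole content of the present statement is therefore that, for the two particular stopping times $\tau_0$ and $\tau_A$, the ``initial'' random integral $\int_{(0,\tau]}e^{-s}\,dY(s,\omega)$ collapses to the single exponentially weighted jump term appearing on the right-hand sides of (\ref{jr9}) and (\ref{jr8}).

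For part (i), I would apply Corollary 2 with $Y=Y^{d}$, which is permissible since $Y^{d}$ is a L\'evy process with finite logarithmic moment and $(\mathcal{F}_t)$ is its natural filtration. Two points then remain. First, $\tau_0=\inf\{t>0:Y^{d}(t)\neq 0\}$ is an $\mathcal{F}_t$-stopping time, being the hitting time of the open set $\mathrm{E}\setminus\{0\}$ by a cadlag process adapted to a right-continuous filtration. Second, and this is the crux, one must show $\int_{(0,\tau_0]}e^{-s}\,dY^{d}(s)=e^{-\tau_0}Y^{d}(\tau_0)$. By the definition of $\tau_0$ the path $s\mapsto Y^{d}(s,\omega)$ is identically zero on $[0,\tau_0(\omega))$, so its left limit obeys $Y^{d}(s-)=0$ for all $s\in(0,\tau_0]$. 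Inserting this into the integration-by-parts representation (\ref{jr3}) kills the term $\int_{(0,\tau_0]}Y^{d}(s-)e^{-s}\,ds$ and leaves precisely $e^{-\tau_0}Y^{d}(\tau_0)$; substituting back into (\ref{jr7}) gives (\ref{jr9}).

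Part (ii) runs along the same lines with the process $t\mapsto Y^{d}(t;A)$ in place of $Y^{d}$. As recalled just before the statement, $Y^{d}(\cdot;A)$ is itself a L\'evy process, in fact compound Poisson since $A$ is separated from zero; I would first note that it inherits the finite logarithmic moment from $Y^{d}$, its L\'evy measure being the restriction to $A\subset\{\|x\|\ge\epsilon\}$ of that of $Y^{d}$. The time $\tau_A$ is then the first arrival time of the Poisson stream of $A$-valued jumps, hence a stopping time, and once more $Y^{d}(s-;A)=0$ for $s\in(0,\tau_A]$, so the identical integration-by-parts argument yields $\int_{(0,\tau_A]}e^{-s}\,dY^{d}(s;A)=e^{-\tau_A}Y^{d}(\tau_A;A)$. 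Corollary 2 then delivers (\ref{jr8}).

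I expect no deep obstacle, only some bookkeeping: one must make sure that the degenerate situation $\tau_0=0$ (infinite jump activity, where both sides collapse to the full integral and $Y^{d}(\tau_0)=Y^{d}(0)=0$) is absorbed without a separate argument, and that the formal identity $\int_{(0,\tau]}e^{-s}\,dY(s)=e^{-\tau}Y(\tau)$ under ``$Y$ vanishes before $\tau$'' is justified at the level of the Stieltjes/It\^o-type integral rather than merely heuristically. This simplification is what carries the entire proof.
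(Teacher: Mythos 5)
Your proposal is correct and follows exactly the paper's route: the paper's entire proof is ``Apply the above stopping times in the equation (\ref{jr7})'', i.e.\ specialize Corollary 2 to $\tau_{0}$ and $\tau_{A}$. Your additional justification that $\int_{(0,\tau_{0}]}e^{-s}\,dY^{d}(s)=e^{-\tau_{0}}Y^{d}(\tau_{0})$ via the integration-by-parts formula (\ref{jr3}) and the vanishing of $Y^{d}(s-)$ on $(0,\tau_{0}]$ merely makes explicit a step the paper leaves to the reader.
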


\begin{proof}
Apply the above stopping times in the equation (\ref{jr7}).
\end{proof}

\noindent
\textbf{Remark 3.} (a) Random integrals appearing in (\ref{jr8}) are
independent, identically distributed and selfdecomposable. Similarly
holds for integrals in (\ref{jr9}) and the outmost integrals in
(\ref{jr7}).\\
(b) If $\tau_{1}=\tau_{A}$ and $\tau_{k}$, $k\ge 1$, are the
consecutive random times of the jumps of the process $Y^{d}(t;A)$
with $\tau_{k}\uparrow +\infty$, a.e., then one gets factorization
\begin{equation}
\int_{(0,\infty)}e^{-s}dY^{d}(t;A)=\sum_{k=1}^{\infty}e^{-\tau_{k}}
\Delta Y(\tau_{k};A),\ a.e.,
\end{equation}
where $\Delta Y(\tau_{k};A)$  are independent of $\tau_{k}-\tau_{k-1}$
for $k\ge 1$.
\vspace{1cm}

\textbf{2.} In this section we consider only \emph{real} valued
random variables. Let $(A,B)$, $(A_{1},B_{1})$, $(A_{2},B_{2}),\ldots$ be
a sequence of i.i.d. random vectors in $\Rset^{2}$ which define the
stochastic difference equation
\begin{equation}
Z_{n+1}=A_{n}Z_{n}+B_{n},\ n\ge 1.\label{jr10}
\end{equation}
Equation (\ref{jr10}) appear in modelling many real situations
including economics, finanse or insurance; cf. for instance \cite{ED1},
\cite{GG1} and the reference there. One many look at (\ref{jr10}) as
an iteration of the affine random mapping $x\mapsto Ax+B$, So,
starting with $Z_{0}$ and $(A_{0},B_{0})=(A,B)$ we get
\[
Z_{n+1}=A_{n}A_{n-1}\ldots A_{0}Z_{0}+
\sum_{k=0}^{n}B_{k}A_{k+1}A_{k+2}\ldots A_{n}.
\]
Putting $Z_{0}=0$ and assuming ($Z_{n}$) converges to $Z$ we get
\[
Z\stackrel{d}{=}\sum_{k=1}^{\infty}B_{k}\prod_{l=1}^{k-1}A_{l}.
\]
In insurance mathematics distributions of $Z$ are called \emph{perpetuities}.
Note that by (\ref{jr10}) perpetuities are the solution to
\begin{equation}
Z\stackrel{d}{=}AZ+B,\label{jr11}
\end{equation}
i.e., $Z$ is \emph{a distributional fixed-point} of the random affine mapping
$x\mapsto Ax+B$, $x\in \Rset$.

What triplets $A,B,X$ satisfy
(\ref{jr11}) with $(A,B)$ independent of $X$? Or are there
independent rv's $A,C,Z$ such that
\begin{equation}
Z\stackrel{d}{=}A(Z+C)\label{jr12}
\end{equation}
It seems that there are not to many explicite examples of (\ref{jr11})
or (\ref{jr12}); cf. \cite{Du1}, p.288. Results from previous section
can now be phrased as follows:

\begin{cor}
\begin{description}
\item[(i)]
All selfdecomposable distributions are perpetuities, i.e, satisfy
(\ref{jr11}) with non-trivial $0\le A\le 1$ a.s.
\item[(ii)]
All sefldecomposable distributions whose BDLP $Y$ have non-zero
purely discontinuous part, have convolution factors that satisfy the
equation (\ref{jr12}).
\end{description}
\end{cor}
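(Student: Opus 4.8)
The plan is to extract both claims directly from the factorizations of Section~1, by specializing the stopping time and then checking the independence demanded by (\ref{jr11}) and (\ref{jr12}).

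For part~(i), I would apply Theorem~\ref{jth1} with a deterministic stopping time $\tau\equiv t>0$, which is an $\F_t$-stopping time for the base furnished by the theorem. This yields independent $X_\tau,X'$ with $X\stackrel{d}{=}X_\tau+e^{-\tau}X'$, $X'\stackrel{d}{=}X$, and $(X_\tau,e^{-\tau})$ independent of $X'$. Setting $A:=e^{-\tau}$, $B:=X_\tau$ and $Z:=X'$, the pair $(A,B)$ is independent of $Z\stackrel{d}{=}X$ and $Z\stackrel{d}{=}AZ+B$, which is exactly (\ref{jr11}); since $\tau\ge0$ one has $0\le A=e^{-\tau}\le1$ a.s., and a strictly positive $t$ gives the nontrivial constant $A=e^{-t}\in(0,1)$. (Allowing a genuinely random $\tau$ produces the same conclusion with a nondegenerate random coefficient $A$ valued in $(0,1]$.)

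For part~(ii), I would isolate a compound-Poisson convolution factor of $X$ and feed it into Corollary~3(ii). Since the purely discontinuous part $Y^d$ is nonzero, its L\'evy measure is nonzero, so I may fix a Borel set $G$ separated from zero with $Y^d(\,\cdot\,;G)$ nondegenerate; being separated from zero, $Y^d(\,\cdot\,;G)$ has finitely many jumps on bounded intervals, hence is compound Poisson, and it is independent of $Y-Y^d(\,\cdot\,;G)$. Consequently $V:=\int_{(0,\infty)}e^{-s}dY^d(s;G)$ is selfdecomposable (Remark~3(a)) and is a convolution factor of $X\stackrel{d}{=}\int_{(0,\infty)}e^{-s}dY(s)$. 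Applying Corollary~3(ii) with its set taken to be $G$ and rearranging (\ref{jr8}) gives
\[
V=e^{-\tau_G}\bigl(Y^d(\tau_G;G)+V'\bigr),\qquad
V':=\int_{(0,\infty)}e^{-s}dY^d(s+\tau_G;G)\stackrel{d}{=}V ,
\]
so that with $A:=e^{-\tau_G}$, $C:=Y^d(\tau_G;G)$ and $Z:=V'$ one reads off $Z\stackrel{d}{=}A(Z+C)$, which is (\ref{jr12}).

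The hard part will be the threefold independence of $A,C,Z$ required by (\ref{jr12}). That $Z=V'$ is independent of $(A,C)=(e^{-\tau_G},Y^d(\tau_G;G))$ follows from Theorem~\ref{jth1}(iii): applied to $V$ at the stopping time $\tau_G$ it makes $(V_{\tau_G},e^{-\tau_G})$ independent of $V'$, and here $V_{\tau_G}=e^{-\tau_G}Y^d(\tau_G;G)$, so the pair $(\tau_G,Y^d(\tau_G;G))$ is a function of $(V_{\tau_G},e^{-\tau_G})$ and is independent of $V'$. The genuinely essential point is that $A=e^{-\tau_G}$ and $C=Y^d(\tau_G;G)$ are independent, i.e.\ that the waiting time to the first jump of $Y^d(\,\cdot\,;G)$ and the value of that jump are independent; this is exactly where separation from zero is used, since it makes $Y^d(\,\cdot\,;G)$ compound Poisson, for which these two quantities are independent. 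By contrast, running this argument on the raw first-jump time $\tau_0$ of $Y^d$ (Corollary~3(i)) would break down when $Y^d$ has infinite activity, as then $\tau_0=0$ a.s.; this is precisely why (ii) must be routed through a convolution factor $Y^d(\,\cdot\,;G)$.
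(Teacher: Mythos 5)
Your proof is correct and takes essentially the approach the paper intends: the paper offers no written argument beyond the remark that the corollary merely rephrases Section 1, and your derivation is exactly that rephrasing --- Theorem 1 with a fixed (or general) stopping time for part (i), and Corollary 3(ii) applied to the compound-Poisson convolution factor $Y^{d}(\cdot\,;G)$ for part (ii). Your explicit check that the first-jump time and first-jump size of $Y^{d}(\cdot\,;G)$ are independent, so that the three random variables in $Z\stackrel{d}{=}A(Z+C)$ are genuinely mutually independent, supplies a detail the paper leaves implicit, and it is handled correctly.
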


Let $\gamma_{\alpha,\lambda}$ denotes a gamma rv with parameters
$\alpha>0$, $\lambda>0$, i.e., it has the probability density
\[
f_{\alpha,\lambda}=\frac{\lambda^{\alpha}}{\Gamma(\alpha)}x^{\alpha-1}e^{-\lambda x}
1_{(0,\infty)}(x).
\]
It is known, cf. \cite{Ju2}, \cite{Ju3}, that $\gamma_{\alpha,\lambda}$ is
selfdecomposable and its BDLP is given by $\lambda^{-1}Y_{0}(\alpha t)$,
where
\begin{equation}
Y_{0}(t)=\sum_{j=1}^{N(t)}\gamma_{1,1}^{(j)},
\end{equation}
$\gamma_{1,1}^{(1)}$, $\gamma_{1,1}^{(2)}\ldots$ are i.i.d. copies of
$\gamma_{1,1}$ and $N(t)$ is a standard Poisson process, i.e., it has
stationary, independent increments, $N(0)=0$ a.e. and for $t>s>0$
\[
\Pra[N(t)-N(s)=k]=e^{-(t-s)}\frac{(t-s)^{k}}{k!},\ k=0,1,2,\ldots\ .
\]
If $0<\tau_{1}<\tau_{2}<\ldots<\tau_{n}<\ldots$ are the consecutive
random times (arrival times) of the jumps of $N$ then
$\tau_{n}-\tau_{n-1}\stackrel{d}{=}\gamma_{1,1}$ for $n\ge 1$ are independent
and $\tau_{n}\stackrel{d}{=}\gamma_{n,1}$ for $n\ge 1$.

\begin{prop}
For gamma rv \ $\gamma_{\alpha,\lambda}$ \ one has
\begin{description}
\item[(i)]
\hfill$\gamma_{\alpha,\lambda}\stackrel{d}{=}
e^{-\gamma_{\alpha,1}}(\gamma_{1,\lambda}+\gamma_{\alpha,\lambda})
\stackrel{d}{=}U^{1/\alpha}\gamma_{\alpha+1,\lambda}$\hfill
\vspace{1ex}

where $U$ is uniformly distributed on $[0,1]$ independent of
$\gamma_{\alpha+1,\lambda}$ and the three middle rv's are independent too.
\item[(ii)]
\hfill$\gamma_{\alpha,\lambda}\stackrel{d}{=}
\displaystyle{\sum_{n=1}^{\infty}U_{1}^{1/\alpha}U_{2}^{1/\alpha}\ldots U_{n}^{1/\alpha}
\gamma_{1,\lambda}^{(n)}}$\hfill
\vspace{1ex}

where $\gamma_{1,\lambda}^{(1)}$, $\gamma_{1,\lambda}^{(2)}\ldots$ are i.i.d. copies of
$\gamma_{1,\lambda}$, $U_{1}$, $U_{2}\ldots$ are i.i.d. copies of
$U$ and both sequences are independent too.
\end{description}
\end{prop}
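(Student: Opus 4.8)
The plan is to read both equalities in (i) off the random-integral machinery of Section 1, and then to obtain (ii) by iterating (i). First I would recall, from the results cited to \cite{Ju2}, \cite{Ju3}, that $\gamma_{\alpha,\lambda}$ has the representation (\ref{jr4}) with purely discontinuous BDLP $Y(t)=\lambda^{-1}Y_0(\alpha t)$, a compound Poisson process whose jumps arrive at the epochs of the sped-up process $N(\alpha\,\cdot)$. Since $Y$ has no Gaussian component, Corollary~\ref{jcor1}(i) applies verbatim: taking the first-jump time $\tau_0=\inf\{t>0:Y(t)\ne 0\}$, formula (\ref{jr9}) gives
\[
\gamma_{\alpha,\lambda}\stackrel{d}{=}e^{-\tau_0}\,Y(\tau_0)+e^{-\tau_0}\,\gamma_{\alpha,\lambda}',
\]
where $\gamma_{\alpha,\lambda}'$ is the shifted tail integral, an independent copy of $\gamma_{\alpha,\lambda}$.

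Next I would identify the three ingredients together with their joint law. Because $N$ is a standard Poisson process run at speed $\alpha$, the first-jump time satisfies $\alpha\tau_0\stackrel{d}{=}\gamma_{1,1}$, so $\tau_0\stackrel{d}{=}\gamma_{1,\alpha}$ is exponential with rate $\alpha$; a one-line change of variables then shows $e^{-\tau_0}\stackrel{d}{=}U^{1/\alpha}$, both carrying the $\mathrm{Beta}(\alpha,1)$ density $\alpha x^{\alpha-1}$ on $(0,1)$. The first jump value is $Y(\tau_0)=\lambda^{-1}\gamma_{1,1}^{(1)}\stackrel{d}{=}\gamma_{1,\lambda}$. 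The independence claims come from two sources: Theorem~\ref{jth1}(iii) makes the pair $(X_{\tau_0},e^{-\tau_0})$ — hence the function $(\tau_0,Y(\tau_0))$ of it — independent of the tail $\gamma_{\alpha,\lambda}'$, while $\tau_0$ and the first jump size are independent by the standard structure of a compound Poisson process (the arrival clock is independent of the mark sequence). This yields the leftmost identity in (i) with the three middle rv's mutually independent. For the second equality I would invoke the beta–gamma algebra: with $U^{1/\alpha}\sim\mathrm{Beta}(\alpha,1)$ independent of $\gamma_{\alpha+1,\lambda}$, the identity $\mathrm{Beta}(\alpha,1)\cdot\gamma_{\alpha+1,\lambda}\stackrel{d}{=}\gamma_{\alpha,\lambda}$ is classical and follows from the densities after the substitution $x=u\,y$.

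For part (ii) the cleanest route is to iterate (i) in the form $\gamma_{\alpha,\lambda}\stackrel{d}{=}U^{1/\alpha}(\gamma_{1,\lambda}+\gamma_{\alpha,\lambda})$, repeatedly unfolding the embedded copy of $\gamma_{\alpha,\lambda}$ with fresh independent factors $U_n$ and summands $\gamma_{1,\lambda}^{(n)}$. After $N$ steps one has
\[
\gamma_{\alpha,\lambda}\stackrel{d}{=}\sum_{n=1}^{N}U_1^{1/\alpha}\cdots U_n^{1/\alpha}\,\gamma_{1,\lambda}^{(n)}+U_1^{1/\alpha}\cdots U_N^{1/\alpha}\,\gamma_{\alpha,\lambda},
\]
and since $\mathbb{E}\,U^{1/\alpha}=\alpha/(\alpha+1)<1$, the prefactor of the remainder has expectation $(\alpha/(\alpha+1))^{N}\to 0$, so the remainder vanishes in distribution while the series (whose $n$-th term has expectation of order $(\alpha/(\alpha+1))^{n}$) converges absolutely; this gives the stated infinite-product representation. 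Alternatively (ii) follows at once from Remark~3(b) applied to $Y=Y^{d}$, identifying $e^{-\tau_k}$ with $U_1^{1/\alpha}\cdots U_k^{1/\alpha}$ through the inter-arrival decomposition $\tau_k-\tau_{k-1}\stackrel{d}{=}\gamma_{1,\alpha}$. I expect the only genuinely delicate point to be the bookkeeping in the time-change: one must carry the speed-up $N(\alpha t)$ correctly so that the exponent in (i) is the exponential law $\gamma_{1,\alpha}$ (equivalently $U^{1/\alpha}$) rather than $\gamma_{1,1}$, which is exactly what makes the two descriptions in (i) agree; everything else is either quoted from Section 1 or a routine density computation.
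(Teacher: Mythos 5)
Your proof is correct and follows essentially the same route as the paper's: part (i) is the first-jump decomposition of Corollary 3(i) applied to the compound Poisson BDLP $\lambda^{-1}Y_{0}(\alpha t)$, with the independence coming from $\F_{\tau_0}$ and the Poisson structure and the second law from the beta--gamma identity, while part (ii) iterates (i) exactly as the paper does (your explicit remainder estimate $\mathbb{E}[U^{1/\alpha}]^{N}\to 0$ fills in what the paper leaves to "$\tau_{n}\uparrow\infty$"). Note that your time-change bookkeeping correctly produces the multiplier $e^{-\gamma_{1,\alpha}}\stackrel{d}{=}U^{1/\alpha}$, which is what makes the two laws in (i) agree; the $e^{-\gamma_{\alpha,1}}$ in the printed statement is evidently a transposition of the parameters.
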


\begin{proof}
(i) Since $\gamma_{\alpha,\lambda}$ has BDLP
$Y(t)=\lambda^{-1}Y_{0}(\alpha t)$, therefore by Corollary \ref{jcor1}(i)
 and Remark 3(a) we have
\begin{eqnarray*}
\gamma_{\alpha,\lambda} &\stackrel{d}{=}& e^{-\gamma_{\alpha,\lambda}}
\gamma_{1,\lambda}^{(1)}+
e^{-\gamma_{\alpha,1}}\int_{(0,\infty)}e^{-s}dY(s+\tau_{1})=\\
&=& U^{1/\alpha}(\gamma_{1,\lambda}^{(1)}+\tilde{\gamma}_{\alpha,\lambda}
\stackrel{d}{=} U^{1/\alpha}\gamma_{1+\alpha,\lambda}.
\end{eqnarray*}
(Equality of the two outmost terms in (i) can be also easily checked by
comparing the corresponding characteristic functions.)

(ii) Repeating the middle equality in (i) and using facts that
$\tau_{n}\uparrow +\infty$ a.s., and $Y(\cdot)$ is independent of
$Y(\cdot + \tau_{1}+\tau_{2}+\ldots+\tau_{k})-Y(\tau_{1}+\tau_{2}+\ldots+\tau_{k})$
one arrives at (ii).
\end{proof}
\bigskip

\textbf{3.} The metod of random integral representation is also applicable to
operator-selfdecomposable distributions; cf. \cite{JM1} Chapter 3, or
\cite{Ju1}. Recall that a Banach space $\mathrm{E}$-valued rv $X$ is
$Q$-selfdecomposable if for each $t>0$ there exists a rv $X_{t}$ independent
of $X$ such that
\begin{equation}
X\stackrel{d}{=} X_{t}+e^{-tQ}X;
\end{equation}
$Q$ is a bounded linear operator on $\mathrm{E}$ and $e^{-t Q}$ is the operator
given by a power series.

\begin{thm}
Suppose that $X$ is $Q$-decomposable $\mathrm{E}$-valued rv and $e^{-tQ}\to 0$,
as $t\to \infty$, in the norm topology. Then there is a filtration
$(\F_{t})_{t\ge 0}$ such that for each stopping times $\tau$ there exist
independent $\mathrm{E}$-valued rv's $X_{\tau}$ and $X'$ satisfying
\begin{description}
\item[(i)] $X\stackrel{d}{=} X_{\tau}+e^{-\tau Q}X'$;
\item[(ii)] $X'$ is independent copy of $X$;
\item[(iii)] the random vector $(X_{\tau},e^{-\tau Q})$ is independent of $X'$.
\end{description}
\end{thm}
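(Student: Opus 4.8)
The plan is to run the argument of Theorem \ref{jth1} step for step, replacing throughout the scalar factor $e^{-s}$ by the operator $e^{-sQ}$. The structural input is the operator random integral representation: by \cite{Ju1} or \cite{JM1} Chapter 3, under the hypothesis $e^{-tQ}\to 0$ in the norm topology a rv $X$ is $Q$-selfdecomposable if and only if there is a L\'evy process $Y$, unique in distribution, with $\mathbb{E}[\log(1+\|Y(1)\|)]<\infty$ and
\[
X\stackrel{d}{=}\int_{(0,\infty)}e^{-sQ}\,dY(s).
\]
The norm condition forces $\|e^{-sQ}\|\le Me^{-\beta s}$ for some $M\ge 1$ and $\beta>0$ (the spectrum of $Q$ lying in the open right half plane), and together with the logarithmic moment this guarantees that the integral converges as the $t\to\infty$ limit of the partial integrals, exactly as in Remark 2.

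Next I would fix the filtration $\F_{t}=\sigma(Y(s):s\le t)$ and introduce the partial integral
\[
Z(t):=\int_{(0,t]}e^{-sQ}\,dY(s)=e^{-tQ}Y(t)+Q\int_{(0,t]}e^{-sQ}Y(s-)\,ds,
\]
the operator version of (\ref{jr3}) obtained by integration by parts, where one uses that $Q$ commutes with $e^{-sQ}$. As in the scalar case (cf. \cite{Sh1}) the process $Z$ is adapted and cadlag, so $Z(\tau)$ is $\F_{\tau}$-measurable for every $\F_{t}$-stopping time $\tau$.

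I would then split the integral at $\tau$ and factor. Writing $s=u+\tau$ and using the semigroup identity $e^{-(u+\tau)Q}=e^{-\tau Q}e^{-uQ}$---which holds with no commutation hypothesis, both factors being powers of the single operator $Q$---gives
\[
\int_{(\tau,\infty)}e^{-sQ}\,dY(s)=e^{-\tau Q}\int_{(0,\infty)}e^{-uQ}\,dY_{\tau}(u),
\]
where $Y_{\tau}(u)=Y(u+\tau)-Y(\tau)$. Setting $X_{\tau}:=Z(\tau)$ and $X':=\int_{(0,\infty)}e^{-uQ}\,dY_{\tau}(u)$ yields (i). Since $Y_{\tau}\stackrel{d}{=}Y$ one has $X'\stackrel{d}{=}X$, which is (ii); and since $Y_{\tau}$ is independent of $\F_{\tau}$ while both $X_{\tau}=Z(\tau)$ and the random operator $e^{-\tau Q}$ are $\F_{\tau}$-measurable, the vector $(X_{\tau},e^{-\tau Q})$ is independent of $X'$, which is (iii).

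The step I expect to require the most care is not the splitting but the operator random integral itself: one must justify pulling the random, $\F_{\tau}$-measurable operator $e^{-\tau Q}$ outside the integral $\int_{(\tau,\infty)}e^{-sQ}\,dY(s)$. Because $e^{-\tau Q}$ is a bounded operator that is constant conditionally on $\F_{\tau}$, while the residual integral is driven by $Y_{\tau}$, which is independent of $\F_{\tau}$, this factorization follows by conditioning on $\F_{\tau}$, with the norm decay $\|e^{-sQ}\|\le Me^{-\beta s}$ again securing convergence of the conditional integral. Beyond this I anticipate no genuinely new analytic difficulty compared with Theorem \ref{jth1}, since the commutativity needed for the semigroup factorization is automatic for a single generator $Q$.
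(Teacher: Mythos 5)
Your proposal is correct and follows essentially the same route as the paper: the paper's proof simply invokes the operator random integral representation $X\stackrel{d}{=}\int_{(0,\infty)}e^{-tQ}dY(t)$ from \cite{Ju1} or \cite{JM1} and then says to proceed as in the proof of Theorem \ref{jth1}, which is exactly the splitting-at-$\tau$ argument you carry out. Your added observations (the exponential norm bound $\|e^{-sQ}\|\le Me^{-\beta s}$ ensuring convergence, and the automatic semigroup commutation for a single generator $Q$) are correct and merely make explicit what the paper leaves implicit.
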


\begin{proof}
From \cite{Ju1} or \cite{JM1} Chapter III we have that $X$ is $Q$-selfdecomposable
if and only if
\begin{equation}
X\stackrel{d}{=}\int_{(0,\infty)} e^{-tQ}dY(t)\label{jr13}
\end{equation}
for a uniquely defined L\'evy process $Y$ such that
$\mathbb{E}[\log(1+\|Y(t)\|]<\infty$. So, (\ref{jr13}) allows us to proceed as
in the proof of Theorem \ref{jth1}.
\end{proof}

\noindent
\textbf{Remark 4.} (a) Corollaries from Section 1 have their ``operator''
counterparts.\\
(b) For a given $\mathrm{E}$-valued rv $B$ and a random bounded linear operator
$A$ on a Banach space $\mathrm{E}$, consider the affine random mapping
$x\mapsto Ax+B$. The question of finding all distributial fix-points, i.e.,
all $\mathrm{E}$-valued rv's $X$ such that
\begin{equation}
X\stackrel{d}{=} AX+B,\label{jr14}
\end{equation}
seems, to be more difficult as the composition of operators is not
commutative. However, random integrals of the form
\begin{equation}
\int_{(a,b]}f(t)dY(r(t)),
\end{equation}
where $Y$ is a L\'evy process, $r(t)$ is a change of time, $f$ is a process
or deterministic funtions, \emph{might provide a tool} of constructing $X$
satisfying the equation (\ref{jr14}) or its variants
(like (\ref{jr12})). The present paper illustrates this approach in a case
of the selfdecomposable distributions and their random integral
representations.

\medskip
\textbf{Added in proof.} Corollary 1 is also true when the
stopping time $\tau$ is replaced by a non-negative random varaible
$T$ independent of the BDLP $Y$.

\noindent
Address:\\
Institute of Mathematics\\
The University of Wroc\l aw\\
pl. Grunwaldzki 2/4\\
50-384 Wroc\l aw, Poland\\
(zjjurek@math.uni.wroc.pl)

\end{document}